\newtheorem{theorem}{Theorem}
\newtheorem{proposition}[theorem]{Proposition}
\newtheorem{lemma}[theorem]{Lemma}
\newtheorem{example}[theorem]{Example}
\def\too{\to}
\def\C{\Bbb C}
\def\G{\Bbb G}
\def\D{\Bbb D}
\def\M{\mathcal M}
\def\N{\Bbb N}
\def\phi{\varphi}
\def\wdtl{\widetilde}
\def\CD{\mathcal D}
\def\O{\mathcal O}
\def\Om{\Omega}
\def\span{\operatorname{span}}
\def\trace{\operatorname{trace}}
\def\ds{\displaystyle}
\begin{document}

\title{Kobayashi-Royden pseudometric vs. Lempert function}

\author{Nikolai Nikolov}
\address{Institute of Mathematics and Informatics\\ Bulgarian Academy of
Sciences\\1113 Sofia, Bulgaria}\email{nik@math.bas.bg }

\author{Peter Pflug}
\address{Carl von Ossietzky Universit\"at Oldenburg, Institut f\"ur Mathematik,
Postfach 2503, D-26111 Oldenburg, Germany}
\email{peter.pflug@uni-oldenburg.de}

\thanks{This note was written during the stay of the first named author at the
University of Oldenburg supported by a DAAD grant (November 2009 -
January 2010). The authors are deeply thankful to Pascal J.~Thomas
for pointing out an essential mistake in a former version of this
work. They are grateful to the referee for his comments which
surely improved the presentation of the paper.}

\subjclass[2000]{32F45}

\keywords{Kobayashi-Royden pseudometric, Lempert function}

\begin{abstract}
We give an example showing that the Kobayashi-Royden pseudometric for a
pseudoconvex domain is, in general, not the deri\-vative of the Lempert function.
\end{abstract}
\maketitle

Let $\D\subset\C$ be the open unit disc. Fix a domain $D\subset\C^n.$
We recall  the definitions of the Lempert function
$l_D$ and the Kobayashi-Royden pseudometric $\kappa_D$ of $D$:
$$\aligned
l_D(z,w)&=\inf\{|\alpha|:\exists\phi\in\O(\D,D):\phi(0)=z,\phi(\alpha)=w\},\\
\kappa_D(z;X)&=\inf\{|\alpha|:\exists\phi\in\O(\D,D):\phi(0)=z,\alpha
\phi'(0)=X\},
\endaligned$$
where $z,w\in D$ and $X\in\C^n.$

By a result of M.-Y.~Pang (see \cite{Pang}), the Kobayashi-Royden
metric is the "derivative" of the Lempert function for taut
domains in $\C^n$ (such domains are pseudoconvex). More precisely,
one can show that if $D\subset\C^n$ is a taut domain (i.e.~$\O(\Bbb D,D)$ is a normal family),
then
$$\kappa_D(z;X)=\lim_{\C^\ast\ni t\rightarrow 0,z'\to z,X'\to X}\frac{l_D(z',z'+tX')}{|t|}$$
($\C^\ast:=\C\setminus \{0\}$).
For a more general result see \cite{Nik-Pfl}. There it is
also proved that
\begin{equation}\label{1}
\kappa_D(z;X)\ge\CD l_D(z;X):=\limsup_{\C^\ast\ni t\rightarrow 0,z'\to z,X'\to X}\frac{l_D(z',z'+tX')}{|t|}
\end{equation}
for any domain $D\subset\C^n.$ Note that there is a bounded pseudoconvex domain $D\subset\C^2$ containing the
origin such that
$\lim_{\C^\ast\ni t\to 0}\frac{l_D(0,tX)}{|t|}$ does not exist (cf. \cite[Example 4.2.10]{Zwo}), where $X:=(1,1)$. 
Therefore, taking $\limsup$ in the previous definition is needed.

The aim of this note is to show that, in general, the inequality
\begin{equation}\label{2}
\kappa_D(z;X)\ge\wdtl \CD l_D(z;X):=\limsup_{\C^\ast\ni t\rightarrow 0}\frac{l_D(z,z+tX)}{|t|}.
\end{equation}
is a strict one.
\smallskip

Denote by $\M_3$ the set of all $3\times 3$ complex matrices and
by $\Om_3\subset\C^9$ the \textit{spectral unit ball}, i.e.~the
set of all matrices from $\M_3$ with all their eigenvalues in
$\D.$

For a matrix $C\in\M_3$ with eigenvalues $\lambda_1,\lambda_2,\lambda_3,$
we define $$\sigma(C)=(\lambda_1+\lambda_2+\lambda_3,
\lambda_1\lambda_2+\lambda_2\lambda_3+\lambda_3\lambda_1,\lambda_1\lambda_2\lambda_3)\in\C^3.$$

Recall that  $\G_3:=\sigma(\Om_3)$ is the so-called
\textit{symmetrized three-disc}. We will need that $\G_3$ is a
taut domain (even hyperconvex, see e.g.~\cite{Edi-Zwo}).

Put
$$A:=\left(\begin{array}{ccc}0&0&0\\0&0&1\\0&0&0\end{array}\right)\text{\ \ and\ \ }
B_t:=\left(\begin{array}{ccc}1&0&0\\0&\omega&0\\0&3t&\omega^2\end{array}\right),\;t\in\C,$$
where $\omega:=e^{2\pi i/3}$. Set $B:=B_0.$ Now we can formulate our result.

\begin{proposition}\label{Prop-Ex} \rm{(a)} $\kappa_{\Om_3}(A;B)>0=\wdtl\CD l_{\Om_3}(A;B).$

\rm{(b)} Moreover, let $(t_j)_j\subset\C^\ast,$ $(C_j)_j\subset\M_3$ ($C_j=(c_{k,l}^j)$) be such that $t_j\to 0,$ $C_j\to B,$ and
$\liminf_{j\to\infty}|c_{3,2}^j/t_j-3|>0.$ Then
$$\lim_{j\to\infty}\frac{l_{\Om_3}(A,A+t_jC_j)}{|t_j|}=0.$$
\end{proposition}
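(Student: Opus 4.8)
My plan is to establish the stronger pointwise statement by producing, for each large $j$, an analytic disc in $\Om_3$ from $A$ to $M_j:=A+t_jC_j$ of radius $o(|t_j|)$; since $l_{\Om_3}(A,M_j)$ does not exceed the radius of any such disc, dividing by $|t_j|$ and letting $j\too\infty$ gives the conclusion. The first step is to see how small the spectrum of $M_j$ is. Computing the trace, the sum of $2\times 2$ principal minors, and the determinant of $M_j$ and writing $\sigma(M_j)=(p_1^j,p_2^j,p_3^j)$, one finds $p_1^j=t_j\trace C_j$, $p_2^j=-t_jc_{3,2}^j+O(t_j^2)$, and $p_3^j=t_j^2(c_{1,2}^jc_{3,1}^j-c_{1,1}^jc_{3,2}^j)+O(t_j^3)$. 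Because $C_j\too B=\operatorname{diag}(1,\omega,\omega^2)$, while $1+\omega+\omega^2=0$ and $B_{3,2}=0$, all three coordinates lie far below $|t_j|$: precisely $p_1^j=o(t_j)$, $p_2^j=o(t_j)$ and $p_3^j=o(t_j^2)$. Thus $\sigma(A)=0$ and the characteristic polynomial of $M_j$ is an $o(t_j)$-perturbation of $x^3$.

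Next I would write down the disc explicitly, using that membership in $\Om_3$ restricts only eigenvalues. For a parameter $\beta_j\in\C^\ast$ to be chosen, set
$$\Phi_j(\zeta)=\left(\begin{array}{ccc} p_1^j\zeta/\beta_j & \zeta & 0\\ 0 & 0 & 1\\ (p_3^j-p_1^jp_2^j)\zeta/\beta_j^2 & -p_2^j\zeta/\beta_j & 0\end{array}\right).$$
Then $\Phi_j(0)=A$, and a direct computation shows the characteristic polynomial of $\Phi_j(\zeta)$ to be $x^3-(p_1^j\zeta/\beta_j)x^2+(p_2^j\zeta/\beta_j)x-p_3^j\zeta^2/\beta_j^2$ (the $(3,1)$-entry is tuned to cancel the unwanted $p_1^jp_2^j$ in the determinant); in particular $\Phi_j(\beta_j)$ has exactly the characteristic polynomial of $M_j$. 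By Rouch\'e on $|x|=1$, all three eigenvalues of $\Phi_j(\zeta)$ lie in $\D$ for every $\zeta\in\D$ provided $|p_1^j|/|\beta_j|+|p_2^j|/|\beta_j|+|p_3^j|/|\beta_j|^2<1$, which holds for the choice $|\beta_j|:=10\max(|p_1^j|,|p_2^j|,|p_3^j|^{1/2})=o(|t_j|)$. Hence $\Phi_j\in\O(\D,\Om_3)$ and has radius $o(|t_j|)$.

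It remains to turn ``same characteristic polynomial'' into ``same point,'' and this is where the hypothesis enters. Since conjugation is an automorphism of $\Om_3$, if $M_j$ is similar to $\Phi_j(\beta_j)$, say $M_j=g_j\Phi_j(\beta_j)g_j^{-1}$, I would pick any holomorphic $h_j:\D\too GL_3(\C)$ with $h_j(0)=I$ and $h_j(\beta_j)=g_j$ (such $h_j$ exists since $GL_3(\C)$ is connected) and replace $\Phi_j$ by $\zeta\mapsto h_j(\zeta)\Phi_j(\zeta)h_j(\zeta)^{-1}$; this disc still lies in $\Om_3$, starts at $A$ and ends at $M_j$, giving $l_{\Om_3}(A,M_j)\le|\beta_j|=o(|t_j|)$. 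As $\Phi_j(\beta_j)$ and $M_j$ share a characteristic polynomial, they are similar exactly when both are nonderogatory; $\Phi_j(\beta_j)$ is cyclic for generic data, so the decisive point is the cyclicity of $M_j$.

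The main obstacle is precisely this cyclicity, and it is where the number $3$ appears. Since $M_j\too A$ and $0$ is the only eigenvalue of $A$, every eigenvalue of $M_j$ is $O(t_j)$, and any eigenvalue $\lambda$ of geometric multiplicity $\ge 2$ must be localized near $t_jc_{1,1}^j$. Imposing $\operatorname{rank}(M_j-\lambda I)\le 1$ and reading off the $\{2,3\}\times\{2,3\}$ minor forces, to leading order, $c_{3,2}^j\approx t_j(c_{2,2}^j-c_{1,1}^j)(c_{3,3}^j-c_{1,1}^j)\too t_j(\omega-1)(\omega^2-1)=3t_j$, using $(\omega-1)(\omega^2-1)=\omega^3-\omega^2-\omega+1=3$. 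Hence $\liminf_j|c_{3,2}^j/t_j-3|>0$ prevents $M_j$ from being derogatory, so $M_j$ is cyclic for large $j$; consistently, the excluded matrices $B_t$, whose $(3,2)$-entry is $3t$, produce the derogatory target $A+tB_t$ with spectrum $\{t,t,-2t\}$. I expect the technical heart to be a clean verification of this criterion---excluding a derogatory eigenvalue away from $t_jc_{1,1}^j$ and handling the degenerate cases $p_3^j=0$ or $\Phi_j(\beta_j)$ non-cyclic; granting it, the construction yields $\lim_j l_{\Om_3}(A,M_j)/|t_j|=0$.
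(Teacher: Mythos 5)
Your treatment of part (b) is essentially correct, and in substance it is the same proof as the paper's, with the lifting machinery inlined rather than quoted. Your computation of $\sigma(A+t_jC_j)=(p_1^j,p_2^j,p_3^j)$ with $p_1^j,p_2^j=o(t_j)$ and $p_3^j=o(t_j^2)$ is exactly the paper's STEP 2, and your companion-type disc $\Phi_j$ followed by the holomorphic conjugation $\zeta\mapsto h_j(\zeta)\Phi_j(\zeta)h_j(\zeta)^{-1}$ is precisely the content of the paper's Lemma \ref{lift-1}: the paper builds the small disc $\phi_j(\zeta)=(\zeta a_j/r_j,\zeta b_j/r_j,\zeta^2 c_j/r_j^2)$ in $\G_3$ and lifts it through $\sigma$ using cyclicity of the target; your $\Phi_j$ is that lift written out explicitly. (For the interpolation step, ``$GL_3(\C)$ is connected'' is too loose as stated; the clean justification, which is what the paper does, is that every $g_j\in GL_3(\C)$ is an exponential, $g_j=e^{S_j}$, so one may take $h_j(\zeta)=e^{\zeta S_j/\beta_j}$.) Where you genuinely diverge is the cyclicity step: the paper writes $(A+t_jC_j)^2+x_j(A+t_jC_j)+y_jE=0$ for a degree-$2$ minimal polynomial and reads off three entries, while you impose $\operatorname{rank}(A+t_jC_j-\lambda E)\le 1$ and read off minors. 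Your sketch does close: the minor on rows $\{1,2\}$ and columns $\{1,3\}$, which contains the entry $1+t_jc_{2,3}^j$, forces $\lambda=t_jc_{1,1}^j+O(t_j^2)$, and then the $\{2,3\}\times\{2,3\}$ minor gives $c_{3,2}^j/t_j\to(\omega-1)(\omega^2-1)=3$, contradicting the hypothesis; also your worry about $\Phi_j(\beta_j)$ being non-cyclic is vacuous, since $(0,0,1)$ is a cyclic vector of $\Phi_j(\zeta)$ for every $\zeta\neq 0$. One assertion you make is false, though not load-bearing, and should be deleted: the eigenvalues of $A+t_jC_j$ need \emph{not} be $O(t_j)$, because perturbing the Jordan block $A$ moves eigenvalues at fractional rates (e.g.\ $c_{3,2}^j=\sqrt{|t_j|}$ is allowed by your hypotheses and produces eigenvalues of size about $|t_j|^{3/4}$); only the derogatory eigenvalue is $O(t_j)$, and only because of the rank condition, not because $M_j\to A$.

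The genuine gap is part (a). Taking $C_j\equiv B$ (so $c_{3,2}^j=0$ and the $\liminf$ condition holds trivially), your part (b) does give $\wdtl\CD l_{\Om_3}(A;B)=0$, but you never address the other half of (a), namely $\kappa_{\Om_3}(A;B)>0$, and nothing in your construction can yield it: you only produce upper bounds for $l_{\Om_3}$, whereas positivity of $\kappa_{\Om_3}(A;B)$ is a lower-bound statement. In the paper this is exactly where Example \ref{counter} enters: one proves $\liminf_{\C^\ast\ni t\to 0}l_{\Om_3}(A,A+tB_t)/|t|\ge 1$ (using that $A+tB_t$ is similar to $\operatorname{diag}(t,t,-2t)$, tautness of $\G_3$, and the plurisubharmonic Minkowski-type function $h_{\G_3}$), and then inequality (\ref{1}) gives $\kappa_{\Om_3}(A;B)\ge\CD l_{\Om_3}(A;B)\ge 1$. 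Without this, or some substitute lower-bound argument, the strict inequality $\kappa_{\Om_3}(A;B)>\wdtl\CD l_{\Om_3}(A;B)$ --- which is the whole point of the proposition --- is not established.
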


Since $\kappa_D$ and $l_D$ have the product property, it follows that, in general,
the inequality (\ref{2}) is strict for pseudoconvex domains in $\C^n$ for any $n\ge 9.$
In fact, the proof below shows that
$\wdtl\CD l_{\wdtl\Om_3}(A;B)=0,$
where $\wdtl\Om_3$ is the set of all traceless matrices in $\Om_3.$ So the inequality in (\ref{2}) is strict
for the pseudoconvex domain $\wdtl\Om_3\subset\C^8.$ This remark is due to Pascal J.~Thomas.
\smallskip

\textit{Problem.} It would be interesting to find such examples also in lower dimensions, as well as to
see if, in general, the inequality (\ref{1}) is strict (as it is conjectured in \cite{Nik-Pfl}).
\smallskip

Note that the condition in Proposition \ref{Prop-Ex}(b) implies that the matrices $A+t_jC_j$ are cyclic
for large $j$ (which is, in fact, what we need in the proof). We point out that without the $\liminf$-condition 
the claim in Proposition \ref{Prop-Ex}(b) might not hold. Indeed, we have the following result.

\begin{example}\label{counter} $\ds 1=\kappa_{\wdtl\Om_3}(A;B)=\lim_{\C^\ast\ni t\rightarrow 0}\frac{l_{\Om_3}(A,A+tB_t)}{|t|}.$
In particular, $$1=\kappa_{\wdtl\Om_3}(A;B)=\kappa_{\Om_3}(A;B)=\CD l_{\wdtl\Om_3}(A;B)=\CD l_{\Om_3}(A;B).$$
\end{example}

Before we prove Proposition \ref{Prop-Ex} we need the
following preparation which is based on \cite[Proposition
4.1]{Tho-Tra}.  Recall that $M\in\M_3$ is said to be
\textit{cyclic} if $M$ has a cyclic vector, i.e.
$\span(v,Mv,M^2v)=\C^3$ for some $v\in\C^3$; for many equivalent
properties see e.g.~\cite{Hor-Joh}.

\begin{lemma}\label{lift-1} Let $M\in\Om_3$ be cyclic and $\phi\in\O(\D,\G_3)$  be such that
$\phi(0)=0$ and $\phi(\alpha)=\sigma(M)$ ($\alpha\in\D$). Then there
exists a $\psi\in\O(\D,\Om_3)$ satisfying $\psi(0)=A,$ $\psi(\alpha)=M$ and $\phi=\sigma\circ\psi$
if and only if $\phi_3'(0)=0$.

In particular,
$$
l_{\Om_3}(A,M)=
\inf\{|\alpha|:\exists\phi\in\O(\D,\G_3):\phi(0)=0,\phi(\alpha)=\sigma(M),\phi'_3(0)=0\}$$
and (since $\G_3$ is a taut domain) there is an extremal disc for $l_{\Om_3}(A,M)$.
\end{lemma}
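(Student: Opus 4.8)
I would prove the equivalence by treating the two implications separately and then read off the ``in particular'' part.

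For \emph{necessity}, suppose a lift $\psi$ exists. Since the third component of $\sigma$ is the determinant, $\phi_3=\det\circ\,\psi$, and $\psi(0)=A$. By Jacobi's formula $\phi_3'(0)=\trace\bigl(\operatorname{adj}(A)\,\psi'(0)\bigr)$, where $\operatorname{adj}$ denotes the adjugate. As $A$ has rank one, all its $2\times2$ minors vanish, so $\operatorname{adj}(A)=0$ and hence $\phi_3'(0)=0$. Equivalently, the determinant component of $d\sigma_A$ vanishes because $A$ is non-cyclic, and this is the structural reason behind the condition.

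For \emph{sufficiency}, assume $\phi_3'(0)=0$. I would start from the companion-matrix lift
$$C(\zeta)=\begin{pmatrix}0&0&\phi_3(\zeta)\\1&0&-\phi_2(\zeta)\\0&1&\phi_1(\zeta)\end{pmatrix},$$
the companion matrix of $X^3-\phi_1(\zeta)X^2+\phi_2(\zeta)X-\phi_3(\zeta)$; it satisfies $\sigma\circ C=\phi$ and is cyclic for every $\zeta$, but $C(0)$ is the regular nilpotent block, not $A$. Conjugating by $D(\zeta)=\operatorname{diag}(1,\zeta,\zeta^2)$, each entry of $D(\zeta)C(\zeta)D(\zeta)^{-1}$ extends holomorphically across $0$ precisely because $\phi_2(0)=0$ and---the crucial point---$\phi_3$ vanishes to order at least two, i.e.\ $\phi_3'(0)=0$. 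The value at $0$ is then a nilpotent matrix whose square is zero; when it is nonzero it is automatically of rank one, i.e.\ of the Jordan type $(2,1)$ of $A$, and in the rare case where this degeneration vanishes one adjusts the weights in $D$ or writes down an explicit lift. Thus in every case the value at $0$ is conjugate to $A$, and a constant conjugation turns it into $A$ itself, yielding a holomorphic lift $\psi_0$ of $\phi$ with $\psi_0(0)=A$. To repair the second endpoint, note that $\psi_0(\alpha)$ has characteristic polynomial $\sigma(M)$ and, being conjugate to $C(\alpha)$, is cyclic; since $M$ is cyclic, $R\,\psi_0(\alpha)\,R^{-1}=M$ for some $R\in\mathrm{GL}_3$. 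Choosing $L$ with $\exp L=R$ (possible as $\exp$ is onto $\mathrm{GL}_3$) and setting $g(\zeta)=\exp\bigl((\zeta/\alpha)L\bigr)$ gives a holomorphic $g\colon\D\to\mathrm{GL}_3$ with $g(0)=I$, $g(\alpha)=R$. Then $\psi:=g\psi_0 g^{-1}$ is holomorphic, $\sigma\circ\psi=\phi$ keeps all eigenvalues in $\D$ so $\psi\in\O(\D,\Om_3)$, and $\psi(0)=A$, $\psi(\alpha)=M$, as required.

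The ``in particular'' statement is then formal: a competitor $\psi$ for $l_{\Om_3}(A,M)$ projects to a competitor $\phi=\sigma\circ\psi$ for the displayed infimum with $\phi_3'(0)=0$ by necessity, and conversely every such $\phi$ lifts back by the construction above, so the two infima agree. Finally, since $\G_3$ is taut I would extract from a minimizing sequence of admissible $\phi$'s a locally uniform limit; the constraints $\phi(0)=0$, $\phi(\alpha)=\sigma(M)$, $\phi_3'(0)=0$ are all preserved in the limit, so the limit is extremal, and lifting it produces an extremal disc for $l_{\Om_3}(A,M)$.

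The main obstacle is the sufficiency, and within it the holomorphic extension across $0$: the heart of the matter is recognizing that the double vanishing of $\phi_3$ is exactly what lets the regular-nilpotent value of the companion lift degenerate \emph{holomorphically in $\zeta$} to a matrix of $A$'s type, together with the bookkeeping needed in the exceptional degenerate case. Once the value at $0$ is fixed, matching the second endpoint is soft, resting only on the cyclicity of $M$ and interpolation in $\mathrm{GL}_3$.
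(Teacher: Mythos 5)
Your necessity argument (Jacobi's formula plus $\operatorname{adj}(A)=0$, since $A$ has rank one) and your endpoint-matching step (conjugating by $\exp((\zeta/\alpha)L)$ with $L$ a logarithm of the matrix conjugating $\psi_0(\alpha)$ to $M$) are correct and essentially identical to the paper's. The gap is in the central sufficiency step, exactly where you yourself locate the heart of the lemma. With your weights $D(\zeta)=\operatorname{diag}(1,\zeta,\zeta^2)$ the conjugated companion matrix has value at $\zeta=0$ equal to
$$N=\begin{pmatrix}0&0&\phi_3''(0)/2\\ 0&0&-\phi_2'(0)\\ 0&0&0\end{pmatrix},$$
and when $\phi_2'(0)=\phi_3''(0)=0$ this is the \emph{zero} matrix, which is not conjugate to $A$; no constant conjugation can repair that, so your construction simply fails there. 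This degenerate case is neither rare nor excludable from the hypotheses: for instance $\phi(\zeta)=(s\zeta/\alpha,0,0)$ with $0<|s|\le|\alpha|<1$ and $\sigma(M)=(s,0,0)$ satisfies everything assumed (and discs of this balanced shape are precisely the ones the paper feeds into the lemma when proving its Proposition). You flag the case but dismiss it with ``one adjusts the weights in $D$ or writes down an explicit lift,'' which is to defer the only nontrivial point of the proof.

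The repair is to divide $\phi_3$ by $\zeta$ only \emph{once}, not twice, so that one of the subdiagonal $1$'s survives at $\zeta=0$: conjugating your $C(\zeta)$ by $\operatorname{diag}(1,\zeta,\zeta)$ gives
$$\begin{pmatrix}0&0&\phi_3(\zeta)/\zeta\\ \zeta&0&-\phi_2(\zeta)\\ 0&1&\phi_1(\zeta)\end{pmatrix},$$
which is holomorphic using only $\phi_3(0)=\phi_3'(0)=0$, and whose value at $0$ is the matrix with a single entry $1$ in position $(3,2)$ --- a rank-one, square-zero nilpotent, hence conjugate to $A$ in \emph{every} case, with no case distinction at all. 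This is exactly what the paper does: its lift $\tilde\psi(\zeta)$ is the bottom-row companion matrix conjugated by $\operatorname{diag}(\zeta,1,1)$, arranged so that $\tilde\psi(0)=A$ on the nose. With that choice of weights, the rest of your argument (constant conjugation, cyclicity of the companion value at $\alpha$, the exponential path, and the ``in particular'' statement via tautness) goes through verbatim.
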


For the convenience of the Reader we give the proof.

\begin{proof} If such a $\psi$ exists, then straightforward calculations show that
$\phi_3'(0)=(\sigma_3\circ\psi)'(0)=0.$

Conversely, assume that $\phi_3'(0)=0$. Put
$$\tilde\psi(\zeta):=\left(\begin{array}{ccc}0&\zeta&0\\0&0&1\\
\phi_3(\zeta)/\zeta&-\phi_2(\zeta)&\phi_1(\zeta)\end{array}\right),\quad\zeta\in\D.$$
Then $\tilde\psi(0)=A$ and $\phi=\sigma\circ\tilde\psi.$ Note also
that $(0,0,1)$ is a cyclic vector for $\tilde\psi(\zeta)$ if
$\zeta\neq 0.$ So $\tilde\psi(\alpha)$ is a cyclic matrix with the
same spectrum as the cyclic matrix $M$ and hence they are
conjugate (cf.~\cite{Hor-Joh}). It remains to write $M$ in the
form $M=e^{-S}\tilde\psi(\alpha)e^S$ for some $S\in\M_3$ and to
set $\psi(\zeta)=e^{-\zeta S/\alpha}\tilde\psi(\zeta)e^{\zeta
S/\alpha}.$
\end{proof}

Now we are able to present the proof of Proposition \ref{Prop-Ex}.

\begin{proof}[Proof of Proposition \ref{Prop-Ex}] In virtue of Example \ref{counter}, we have only
to verify that
$$
\lim_{j\to\infty}\frac{l_{\Om_3}(A,A+t_jC_j)}{|t_j|}=0
$$
under the above condition on the $c_{3,2}^j$.

\textit{STEP 1.} First we prove that the $\liminf$-condition implies that $A+t_jC_j$ are cyclic matrices for sufficiently large $j$'s. Assume that all (otherwise take an appropriate subsequence) $A+t_jC_j$ are non cyclic matrices. Therefore, their minimal
polynomials are of degree less than $3$ (cf.~\cite{Hor-Joh}).
So their degrees are equal to $2$ for sufficiently large $j.$ Then
$$
(A+t_jC_j)^2+x_j(A+t_jC_j)+y_jE=0,\quad j\in\N,
$$
where $x_j, y_j\in\C$, and $E$ denotes the unit matrix in $\M_3$. So we get $9$ equations;
each of them is denoted by $E_{k,\ell}^j$, where the indices $k$ and $\ell$ denote the row and the column,
respectively. Looking at equation $E_{2,3}^j$ we get $x_j/t_j\too 1$.
Putting this into equation $E_{1,1}^j$ leads to $y_j/t_j^2\too -2$. Finally, equation $E_{2,2}^j$ implies that
$c_{3,2}^j/t_j\too 2-\omega-\omega^2=3$; a contradiction.

\textit{STEP 2.} By step 1 we know that  all
matrices $A+t_jC_j$  are cyclic and belong to $\Om_3$ if  $j\geq j_0$. Calculations show that
$$
\sigma(A+t_jC_j)=(t_jf_1(C_j),t_jf_2(C_j),t_j^2f_3(C_j))=:(a_j,b_j,c_j),
$$
with $f_1(C_j)\too 0$, $f_2(C_j)\too 0$, and $f_3(C_j)\too 0$.

Put
$$
\phi_j(\zeta):=(\zeta a_j/r_j, \zeta b_j/r_j,\zeta^2 c_j/r_j^2), \quad\zeta\in\D,
$$
where $r_j:=\max\{3|a_j|,3|b_j|,\sqrt{3|c_j|}\}$. Then $\phi_j\in\O(\D,\G_3)$ with
$\phi_j(0)=0$, $\phi_{j,3}'(0)=0$, and $\phi_j(r_j)=\sigma(A+t_jC_j)$.
Hence, by Lemma \ref{lift-1},
$$
l_{\Om_3}(A,A+t_jC_j)/|t_j|\leq r_j/|t_j|\to 0.
$$
Hence the proof is finished.
\end{proof}

Finally we present the proof of the example.

\begin{proof}[Proof of Example \ref{counter}] Since $A+\zeta B\in\wdtl\Om_3$ for any $\zeta\in\D,$
it follows that $\kappa_{\wdtl\Om_3}(A;B)\le 1.$

By (1), it remains to show that $\liminf_{\C^\ast\ni
t\rightarrow 0}\frac{l_{\Om_3}(A,A+tB_t)}{|t|}\ge 1.$

Note that $A+tB_t$ is similar to the matrix
$D_t=\mbox{diag}(t,t,-2t)$ and hence $l_{\Om_3}(A,A+tB_t)
=l_{\Om_3}(A,D_t)$ (use the same argument as the one at the end of
the proof of Lemma \ref{lift-1}).

Let $(t_j)_j\subset\C^\ast$, $t_j\to 0$, such that
$l_{\Om_3}(A,D_{t_j})/|t_j|\to c$.

Now choose
$\psi_j=(\psi_{j,(kl)})_{k,l=1,2,3}\in\O(\D,\Om_3)$  such that
$\psi_j(0)=A$, $\psi(\alpha_j)=D_{t_j}$, and
$\alpha_j/t_j=|\alpha_j|/|t_j|\to c$. Setting
$$
\phi_j=(\phi_{j,1},\phi_{j,2},\phi_{j,3}):=\sigma\circ\psi_j,
$$
we have the following equations:
$$\phi_{j,1}=\trace \psi_j,\ \ \phi_{j,3}=\det \psi_j,$$
$$\begin{aligned}
\phi_{j,2}&=\psi_{j,(11)}\psi_{j,(22)}+\psi_{j,(11)}\psi_{j,(33)}+\psi_{j,(22)}\psi_{j,(33)}\\
&-\psi_{j,(12)}\psi_{j,(21)}-\psi_{j,(13)}\psi_{j,(31)}-\psi_{j,(23)}\psi_{j,(32)}.
\end{aligned}
$$
Then
straightforward  calculations show that $\phi'_{j,3}(0)=0$ and
$$
\phi_{j,3}'(\alpha_j)-t_j\phi_{j,2}'(\alpha_j)+t_j^2\phi_{j,1}'(\alpha_j)=0.
$$
Writing
$$
\phi_j(\zeta)=(\zeta\theta_{j,1}(\zeta),\zeta\theta_{j,2}(\zeta),
\zeta^2\theta_{j,3}(\zeta)),
$$
the last condition becomes
\begin{equation}\label{prel}
t_j^3=\alpha_j^2(\alpha_j\theta_{j,3}'(\alpha_j)-t_j\theta_{j,2}'(\alpha_j)+t_j^2\theta_{j,1}'(\alpha_j))
\end{equation}
(use that $\theta_{j,1}(\alpha_j)=0,$
$\theta_{j,2}(\alpha_j)=-3t_j^2/\alpha_j$ and
$\theta_{j,3}(\alpha_j)=-2t_j^3/\alpha_j^2$). Since $\G_3$ is a
taut domain, passing to a subsequence, we may assume that
$\phi_j\to\phi=(\zeta\rho_1,\zeta^2\rho_2,\zeta^3\rho_3)\in\O(\D,\G_3)$
and $\rho_1(0)=0.$ Then the equation (\ref{prel}) implies that
$$\rho_3(0)=k^3+k\rho_2(0),$$
where $k:=1/c.$

It follows by \cite[Proposition 1]{Nik} (see also
\cite[Proposition 16]{Edi-Zwo}) that
$h_{\G_3}(z):=\max\{|\lambda|:\lambda^3-z_1\lambda^2+z_2\lambda-z_3=0\}$
is a (logarithmically) plurisubharmonic function with  $\G_3=\{z\in\C^3:h_{\G_3}(z)<1\}$. In fact, $h_{G_3}$ is
the Minkowski function of the $(1,2,3)$-balanced domain $\G_3$. Since
$$|\zeta| h_{\G_3}(\rho_1(\zeta),\rho_2(\zeta),\rho_3(\zeta))=h_{\G_3}(\phi(\zeta))<1,\quad \zeta\in\D,$$
the maximum principle for plurisubharmonic functions implies that
$h_{\G_3}(\rho_1,\rho_2,\rho_3)\leq 1$ on $\D$. In particular,
$h_{\G_3}(\rho_1(0),\rho_2(0),\rho_3(0))\leq 1$. Therefore, all
zeros of the polynomial
$P(\lambda):=\lambda^3-\rho_1(0)\lambda^2+\rho_2(0)\lambda-\rho_3(0)$,
lie in $\overline\D$. Note that
$P(\lambda)=(\lambda-k)(\lambda^2+k\lambda+k^2+\rho_2(0))$; hence
$c\geq 1$.
\end{proof}


\begin{thebibliography}{}

\bibitem{Edi-Zwo} A. Edigarian, W. Zwonek, {\it Geometry of the symmetrized polydisc},
Arch. Math. (Basel) 84 (2005), 364-374.

\bibitem{Hor-Joh} R. A. Horn, C. R. Johnson, {\it Matrix Analysis}, Cambridge University
Press, Cambridge-New York-Melbourne, 1985.

\bibitem{Nik} N. Nikolov, {\it The symmetrized polydisc cannot be exhausted by domains
biholomorphic to convex domains}, Ann. Pol. Math. 88 (2006),
279-283.

\bibitem{Nik-Pfl} N. Nikolov, P. Pflug, {\it On the derivatives of the
Lempert functions}, Ann. Mat. Pura Appl. 187 (2008), 547-553.

\bibitem{Nik-Tho} N. Nikolov, P. J. Thomas, {\it On the zero set of the
Kobayashi-Royden pseudometric of the spectral ball}, Ann. Pol. Math. 93 (2008),
53-68.

\bibitem{Nik-Tho1} N. Nikolov, P. J. Thomas, {\it Separate continuity of the Lempert
function of the spectral ball}, J. Math. Anal. Appl. 367 (2010) 710-712.

\bibitem{Pang} M.-Y. Pang, {\it On infinitesimal behavior of
the Kobayashi distance}, Pacific J. Math. 162 (1994), 121-141.

\bibitem{Tho-Tra} P. J. Thomas, N. V. Trao, {\it Discontinuity of the Lempert function
of the spectral ball}, Proc. Amer. Math. Soc. 138 (2010), 2403-2412.

\bibitem{Zwo} W. Zwonek, {\it Completeness, Reinhardt domains and the method of complex geodesics
in the theory of invariant functions}, Dissert. Math. 388 (2000).

\end{thebibliography}
\end{document}